\definecolor{darkgreen}{rgb}{0,0.6,0}
\newtheorem{problem}{Problem}
\newtheorem{theorem}{Theorem}
\newtheorem{proposition}[theorem]{Proposition}
\newtheorem{remark}{Remark}
\newtheorem{definition}{Definition}
\definecolor{note}{rgb}{0.1,0.1,1}
\definecolor{rephase}{rgb}{0.15,0.7,0.15}
\definecolor{bag}{rgb}{0.6,0.6,0.2}
\renewcommand*\env@matrix[1][c]{\hskip -\arraycolsep
  \let\@ifnextchar\new@ifnextchar
  \array{*\c@MaxMatrixCols #1}}
\newcommand{\transpose}{\mathsf{T}}
\newcommand{\mathleft}{\@fleqntrue\@mathmargin0pt}
\newcommand{\mathcenter}{\@fleqnfalse}
\definecolor{orange}{RGB}{255,127,0}
\title{\LARGE \bf Lie Algebraic Cost Function Design for Control on Lie Groups} 
\author{Sangli Teng, William Clark, Anthony Bloch, Ram Vasudevan, Maani Ghaffari
\thanks{Toyota Research Institute provided funds to support this work. Funding for M. Ghaffari was in part provided by NSF Award No. 2118818. W. Clark was supported by NSF grant DMS-1645643. A. Bloch was supported in part by NSF grant DMS-2103026 and AFOSR grant FA0550-18-0028.}
\thanks{S. Teng, A. Bloch, R. Vasudevan, and M. Ghaffari are with the University of Michigan, Ann Arbor, MI 48109, USA. \texttt{\{sanglit,abloch,ramv,maanigj\}@umich.edu}.}%
\thanks{W.~Clark is with the department of Mathematics, Cornell University, Ithaca, NY. {\tt\small wac76@cornell.edu}}
}
\begin{document}

\maketitle
\thispagestyle{empty}
\pagestyle{empty}

\begin{abstract}
This paper presents a control framework on Lie groups by designing the control objective in its Lie algebra. Control on Lie groups is challenging due to its nonlinear nature and difficulties in system parameterization. Existing methods to design the control objective on a Lie group and then derive the gradient for controller design are non-trivial and can result in slow convergence in tracking control. We show that with a proper left-invariant metric, setting the gradient of the cost function as the tracking error in the Lie algebra leads to a quadratic Lyapunov function that enables globally exponential convergence. In the PD control case, we show that our controller can maintain an exponential convergence rate even when the initial error is approaching $\pi$ in SO(3). We also show the merit of this proposed framework in trajectory optimization. The proposed cost function enables the iterative Linear Quadratic Regulator (iLQR) to converge much faster than the Differential Dynamic Programming (DDP) with a well-adopted cost function when the initial trajectory is poorly initialized on SO(3). 


\end{abstract} 

\IEEEpeerreviewmaketitle

\section{Introduction} 

Geometric control techniques that incorporate differential geometry \cite{bullo2019geometric} with control theory have been applied to many robotics systems, e.g., legged robots \cite{yanran-NMPC-variational, teng2022error, Chig-VBL, vblMPC} and unmanned aerial vehicles (UAV) \cite{sreenath2013geometric, lee2010geometric}. For systems on Lie groups, geometric thinking enables a better choice of coordinates. Therefore, issues of local coordinates, such as singularities in Euler angles \cite{Euler-survey} and poor linearization in observer design \cite{barrau2017invariant, huang2010observability} can be avoided. Despite the merits, describing systems on the Lie group also introduces difficulties in cost function design and the analysis of its derivatives.

A Proportional-Derivative (PD) controller has been proposed and applied to control fully actuated mechanical systems \cite{bullo1999tracking} by defining the configuration and velocity error on a Riemannian manifold. This framework has also been applied to Lie groups such as SE(3) for UAV control \cite{lee2010geometric, lee2011geometric}. The trace function \cite{koditschek1989application} has been introduced and applied in \cite{bullo1999tracking, lee2010geometric, vblMPC, Wu-VBL, Chig-VBL, yanran-NMPC-variational} to indicate the configuration error of rotational motion. However, this error function may lead to slow error convergence \cite{johnson2022globally, teng2022error} when the rotational error is large. To solve the above problem, the logarithmic error has been applied \cite{yu2015high, johnson2022globally, teng2022error}. However, the \cite{yu2015high, teng2022error} does not prove the stability property. The proof in \cite{johnson2022globally} is specific to SO(3) and requires the left Jacobian of SO(3), thus making it less general for systems on Lie groups. 



\begin{figure}[t]
\label{fig:cover}
    \centering
    \includegraphics[width=\columnwidth]{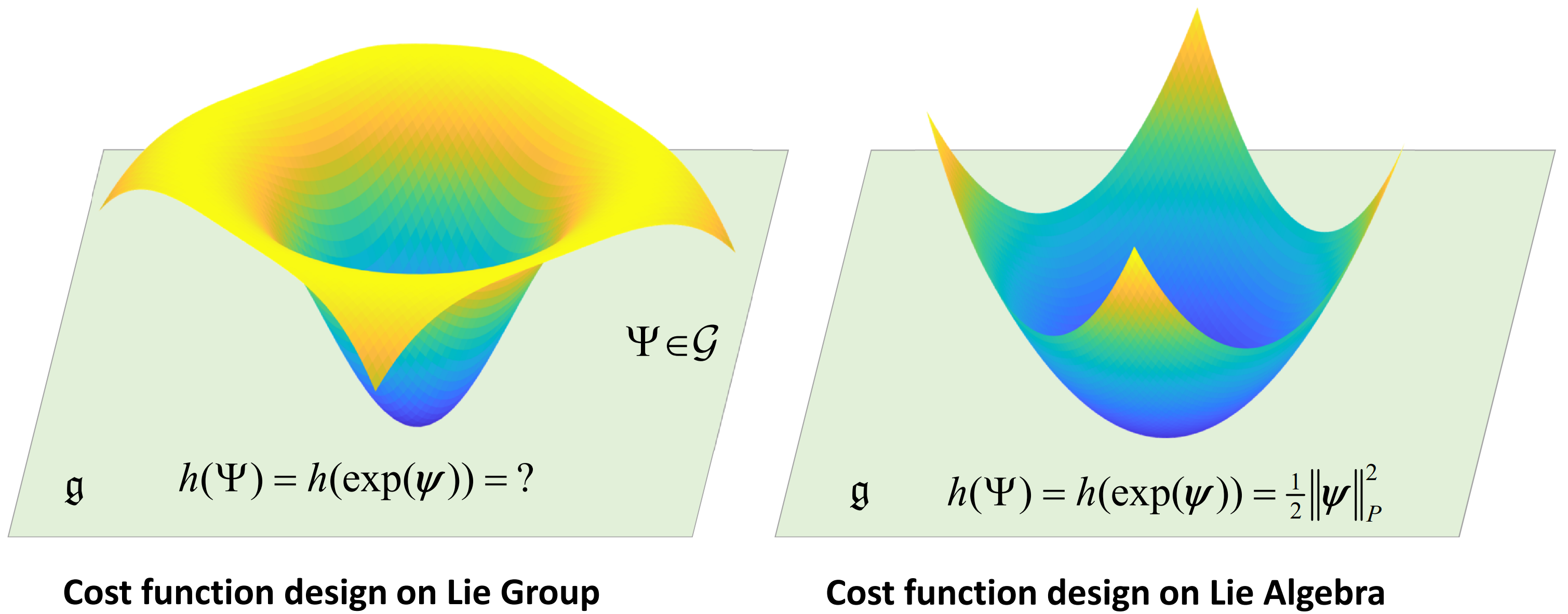}
    \caption{We derive the cost function design for control problems on Lie groups. However, conventional methods design the cost function on the group, which may introduce undesired behavior such as slow convergence. For the SO(3) case, using the trace function to indicate configuration error results in a cost function on the left, whose gradient vanishes when the error becomes large. Instead, we design a quadratic cost function in the Lie algebra that ensures exponential stability and accelerates trajectory optimization.}
    \label{fig:cover}
\end{figure}

Optimization-based control using geometric methods has also been explored in recent years. Research in  \cite{watterson2020trajectory, bonalli2019trajectory} applied optimization methods to generate optimal trajectories on Riemannian manifolds. A factor graph-based optimization-based control is proposed in \cite{ta2014factor} that derived the gradient on manifolds for optimization. The work of \cite{ct-LQR-LieGroup} proposed the Lie group projection operator Newton method for continuous-time control on Lie groups. A discrete-time Differential Dynamic Programming (DDP) algorithm is proposed on Lie groups in \cite{lie-ddp}. The main procedure of \cite{ct-LQR-LieGroup, lie-ddp} is to derive the local perturbed system and then solve a local optimal control problem via dynamic programming. Superlinear convergence is possible for both cases. The main drawback of these works is the derivation of the cost function and linearization. They mainly apply to general Riemannian geometries while not fully utilizing the symmetry of Lie groups. 

In this work, we focus on control problems on Lie groups. We exploit the existing symmetry structure in Lie groups and the fact that the Lie algebra of a Lie group completely captures the local structure of the group. This approach enables one to represent the system in a vector space and measure the distance between two arbitrary configurations. Moreover, designing the cost function in the Lie algebra enables a more concise formulation for all connected matrix Lie groups. Though this idea is intuitive, it is not trivial as the equation of motion is nonlinear and represented on the group. Therefore, bridging the gap between the cost function in the Lie algebra and the equation of motion on the Lie group is the key novelty of this work. Fig.~\ref{fig:cover} illustrates the proposed idea. 

In particular, this paper has the following contributions:
\begin{enumerate}
    \item a control framework on Lie groups with the gradient of the cost function described in the corresponding Lie algebra; 
    \item a proof illustrating that a quadratic Lyapunov function of the configuration error in the Lie algebra can enable globally exponential stability; 
    \item a PD tracking controller and an iterative Linear Quadratic Regulator method using the proposed control framework, and
    \item a numerical simulation that illustrates that the proposed method exhibits better performance than existing methods even when the initialization is chosen poorly.
\end{enumerate}
The remaining of this paper is organized as follows. Section~\ref{sec:prelim} presents the necessary mathematical and control backgrounds. Section~\ref{sec:cost} introduces the main results of designing the cost function in the Lie algebra. 
A PD tracking controller and an iLQR-based trajectory optimization method on SO(3) are introduced in Section~\ref{sec:design}. Section~\ref{sec:sim} gives the results of the numerical simulation. Discussion is presented in Section~\ref{sec:discuss}, and the conclusion is presented in Section~\ref{sec:conclusion}.


\section{Preliminary}
\label{sec:prelim}
This section provides an overview of the mathematical  backgrounds regarding the proposed approach. 

\subsection{Differential geometry}

Let $M$ denote a smooth manifold. The tangent (cotangent) space of $M$ at $x \in M$ is denoted by $T_xM$ ($T_x^{*}M$). We equip the manifold $M$ with a Riemannian metric, i.e., a smooth map associating to each tangent space $T_x M$ an inner product $\langle \cdot, \cdot \rangle_x$. 
Given a real-valued function $f$ on manifold $M$, the gradient is the vector field $\nabla f$ such that
$$\langle \nabla f, Y \rangle:=\mathcal{L}_{Y}f,$$
where $\mathcal{L}_{Y}f$ is the Lie derivative of $f$ with respect to the smooth vector field $Y$.

\subsection{Lie group}
Let $\mathcal{G}$ be an $n$-dimensional matrix Lie group and $\mathfrak{g}$ its associated Lie algebra (hence, $\dim \mathfrak{g} = n$)~\cite{chirikjian2011stochastic,hall2015lie}. For convenience, we define the following isomorphism
\begin{equation}
    (\cdot)^\wedge:\mathbb{R}^n \rightarrow \mathfrak{g},
\end{equation}
that maps an element in the vector space $\mathbb{R}^n$ to the tangent space of the matrix Lie group at the identity. We also define the inverse of $(\cdot)^\wedge$ map as:
\begin{equation}
    (\cdot)^\vee:\mathfrak{g} \rightarrow \mathbb{R}^n. 
\end{equation}
Then, for any $\phi \in \mathbb{R}^{n}$, we can define the Lie exponential map as
\begin{equation}
    \exp(\cdot):\mathbb{R}^{n} \rightarrow \mathcal{G},\ \ \exp(\phi)=\operatorname{exp_m}({\phi}^\wedge),
\end{equation}
where $\operatorname{exp_m}(\cdot)$ is the exponential of square matrices. We also define the Lie logarithmic map as the inverse of Lie exponential map
\begin{equation}
    \log(\cdot): \mathcal{G} \rightarrow \mathbb{R}^{n}. \ \ 
\end{equation}
For every $X \in \mathcal{G}$, the adjoint action, $\mathrm{Ad}_{X}: \mathfrak{g}\rightarrow \mathfrak{g}$, is a Lie algebra isomorphism that enables change of frames 
\begin{equation}
    \mathrm{Ad}_{X}({\phi}^\wedge)= X{{\phi}^\wedge}X^{-1}.
\end{equation}
Its derivative at the identity gives rise to the adjoint map in the Lie algebra as
\begin{equation}
    \mathrm{ad}_{\phi}\eta = [{\phi}^\wedge, {\eta}^\wedge],
\end{equation}
where $\phi^\wedge, \eta^\wedge \in \mathfrak{g}$ and $[\cdot, \cdot]$ is the Lie Bracket.
For a trajectory on a Lie group, we have the reconstruction equation
\begin{equation}
    \frac{d}{dt}X_t = X_t \xi_t^{\wedge},
\end{equation}
where $X_t \in \mathcal{G}$ and $\xi_t^\wedge \in \mathfrak{g}$.

\subsection{Configuration error dynamics}
Consider the trajectory $X_t$ and the nominal trajectory $X_{d,t}$ on $\mathcal{G}$ and the corresponding twist $\xi_t$ and $\xi_{d,t}$, respectively. Then
\begin{align*}
    \frac{d}{dt} X_t = X_t {\xi}^\wedge_t, \text{ and } \frac{d}{dt} X_{d, t} = X_{d,t} {\xi}^\wedge_{d,t}.
\end{align*}
Similar to the left or right error defined in \cite{bullo1999tracking}, we define the error between $X_t^d$ and $X_t$ as
\begin{equation}
\label{eq:X_err}
    \Psi_t = X_{d,t}^{-1} X_t \in \mathcal{G}. 
\end{equation}
For the tracking problem, our goal is to drive the error from the initial condition $\Psi_0$ to the identity $I \in \mathcal{G}$. Taking derivatives on both sides of~\eqref{eq:X_err}, we have
\begin{align*}
    \dot{\Psi}_t &=X_{d,t}^{-1}\frac{d}{dt} X_t + \frac{d}{dt} (X_{d,t}^{-1}) X_t \\
    &=X_{d,t}^{-1}\frac{d}{dt} X_t-X_{d,t}^{-1}\frac{d}{dt}(X_{d,t})X_{d,t}^{-1} X_t\\
    &=X_{d,t}^{-1}X_t{\xi}^\wedge_t - X_{d,t}^{-1}X_{d,t} {\xi}^\wedge_{d,t}X_{d,t}^{-1}X_t 
    =\Psi_t{\xi}^\wedge_t-{\xi}^\wedge_{d,t}\Psi_t.
\end{align*}
Therefore, 
\begin{equation}
\label{eq:error_dynamics}
\begin{aligned}
\dot{\Psi}_t &= \Psi_t({\xi}_t - \Psi_t^{-1}{\xi}_{d,t}\Psi_t)^\wedge =\Psi_t({\xi}_t - \mathrm{Ad}_{\Psi_t^{-1}}{\xi}_{d,t})^\wedge.
\end{aligned}
\end{equation}
To indicate the difference between two configurations on $\mathcal{G}$, we introduce an error function $h$ defined on $\mathcal{G}$. 
\begin{definition}
\label{def:error_fun}
Given the configuration error $\Psi$, we say that a function $h$ is an error function if it is positive definite, such that $h(\Psi) \ge 0$ for any $\Psi$, and $h(\Psi) = 0$ if and only if $\Psi = I$. We also say that the error function is symmetric if $h(\Psi) = h(\Psi^{-1})$.
\end{definition}

\subsection{Lyapunov stability theory}
We introduce a Lyapunov stability theorem that can certify the stability of a dynamics system.  
\begin{definition}
Let $V: \mathcal{D} \to \mathbb{R}$ be a continuously differentiable function, such that 
\begin{equation}
    V(0) = 0\ \ \text{and} \ \ V(x) \ge 0\ \  \text{for all}\ \ x \in \mathcal{D}\setminus0,
\end{equation}
\begin{equation}
    \dot{V}(x) \le 0\ \ \text{for all}\ \ x \in \mathcal{D}.
\end{equation}
Then we say $V$ is a Lyapunov function \cite{khalil2002nonlinear}. 
\end{definition}
A stronger condition is the exponential stability. 
\begin{definition}
\label{def:ES_lyap}
Let $V$ be a Lyapunov function and suppose there exist constants $c_1, c_2, c_3 > 0$, such that
\begin{equation}
\label{eq:ES_stable}
\begin{aligned}
c_1 \|x\|^2 \le V(x) \le c_2 \|x\|^2, \\
\dot{V}(x) \le -c_3\|x\|^2. 
\end{aligned}
\end{equation}
Then the origin is exponentially stable, i.e., there exist \mbox{$\alpha, \beta > 0$} such that 
\begin{equation}
    \|x(t)\| \le \beta \|x(0)\|\exp{(-\alpha t)}.
\end{equation}
If \eqref{eq:ES_stable} holds globally, then the origin is globally stable \cite{khalil2002nonlinear}.
\end{definition}


\section{Cost function design}
\label{sec:cost}
A wide range of the geometric control literature considers the design of the error or cost function on manifolds~\cite{bullo1999tracking, ct-LQR-LieGroup, lie-ddp}. These methods are general, but they introduce difficulties while deriving the gradient or Hessian matrix of a cost function on the manifold. Additionally, they do not fully utilize the symmetry of Lie groups, especially the properties of the Lie algebra. 

To address this challenge, we instead design the gradient of the error function and show that the corresponding error function can satisfy certain stability properties. 


\subsection{Cost function design via gradient}
Here we start with a tracking control problem of regulating the error $\Psi_t$ between two trajectories. Let \mbox{$h:=h(\Psi)$} denotes the candidate error function. 
Its time derivative is 
\begin{equation}
\begin{aligned}
\frac{d}{dt}h &= \mathcal{L}_{\dot{\Psi}}h \\
&= \langle \nabla h, \dot{\Psi} \rangle \\
&= \langle \nabla h, \Psi ({\xi} - \mathrm{Ad}_{\Psi^{-1}}{\xi}_{t})^\wedge \rangle . 
\end{aligned}
\end{equation}
Now we can define a metric and shape the convergence property. For the tracking control, we define the following left-invariant inner product on $\mathcal{G}$.
\begin{definition}
\label{def:inner_product}
Given $\phi_1, \phi_2\in \mathbb{R}^{\dim \mathfrak{g}}$ and $\phi_1^{\wedge}, \phi_2^{\wedge} \in \mathfrak{g}$, we define the inner product $\langle \phi_1^{\wedge}, \phi_2^{\wedge}\rangle_{\mathfrak{g}} = \phi_1^{\transpose} P \phi_2 $,
where $P$ is a positive definite matrix. This inner product is left-invariant. To see this, suppose $X\phi_1^\wedge, X\phi_2^\wedge \in T_{X}\mathcal{G}$, $ \forall{X} \in \mathcal{G}$, then 
\begin{align*}
    \langle X\phi_1^{\wedge}, X\phi_2^{\wedge}\rangle_X &= \langle (\ell_{X^{-1}})_* X\phi_1^{\wedge}, (\ell_{X^{-1}})_* X\phi_2^{\wedge}\rangle_{\mathfrak{g}} \\
    &= \langle \phi_1^{\wedge}, \phi_2^{\wedge}\rangle_{\mathfrak{g}}.
\end{align*}
Note that $(\ell_{X^{-1}})_* = X^{-1}:T_{X}\mathcal{G} \to \mathfrak{g}$ is the pushforward map.

\begin{definition}
We denote the norm induced by inner product in \eqref{def:inner_product} as $\|\phi\|_{P}$, $\phi^{\wedge} \in \mathfrak{g}$, such that
\begin{equation}
    \| \phi \|_{P} = \sqrt{\langle \phi^{\wedge}, \phi^{\wedge} \rangle}_{\mathfrak{g}} = \sqrt{\phi^{\transpose}P \phi}.
\end{equation}
\end{definition}
\begin{remark}
In controller design or verification of stability, we do not need to design $P$. We only need to verify that $P$ exists and is positive definite. 
\end{remark}
\end{definition}

With the inner product in Definition \ref{def:inner_product}, we now design the gradient of the candidate error function $h$. By the exponential map, we have the error in Lie algebra $\psi$, such that 
$$\Psi=\exp(\psi),\ \Psi\in\mathcal{G},\ {\psi}^{\wedge}\in\mathfrak{g}.$$
As we wish the equilibrium of the tracking control to be exponentially stable,  we mimic the quadratic Lyapunov function in linear control and design $\nabla h$ to be linear to $\psi$,
\begin{equation}
    \nabla h = \Psi\psi^{\wedge}. 
\end{equation}
Thus, a feedback for the first order system \eqref{eq:error_dynamics} is
\begin{equation}
    \label{eq:lin_fb_1st}
    \xi = -K\psi + \mathrm{Ad}_{\Psi^{-1}}\xi_d. 
\end{equation}
We then have the corresponding closed loop system as follows.
\begin{equation}
\label{eq:dhdt}
\begin{aligned}
\frac{d}{dt}h & = \langle\Psi\psi^{\wedge}, -\Psi (K\psi)^{\wedge} \rangle_{\Psi} \\
& = \langle\psi^{\wedge}, - (K\psi)^{\wedge} \rangle_{\mathfrak{g}} \\
& = -\psi^{\transpose}\frac{( PK + K^{\transpose}P )}{2}\psi \\
& =: -\psi^{\transpose}Q\psi
\end{aligned}
\end{equation}
By the Lyapunov equation
\begin{equation}
\label{eq:lyap_eq}
    P(-K) + (-K)^{\transpose}P + 2Q = 0, 
\end{equation}
we find that any gain matrix $K$ with only positive eigenvalues will ensure that for any $Q>0$, there is a positive definite matrix $P$ that satisfies \eqref{eq:lyap_eq}. 

\subsection{Existence and scaling of the candidate function }
We have shaped the time derivative of the candidate error function $h$. The remaining issue is to show the existence and the scaling of $h$. Now we prove that $h$ exists and it is a quadratic function of $\psi$. 


Let $\mathrm{d}$ denote the differential. We can write $\mathrm{d}\Psi \in T_{\Psi}^{*}\mathcal{G}$ and $\mathrm{d}\Psi = \Psi \mathrm{d}\eta^{\wedge}$, where $\mathrm{d}\eta^{\wedge} \in \mathfrak{g}^*$ and $\eta^{\wedge} \in \mathfrak{g}$. Then
\begin{equation}
\begin{aligned}
    \mathrm{d}{h} &= \langle\nabla h,  \mathrm{d}\Psi \rangle_{\Psi} = \langle \Psi \psi^{\wedge},  \Psi \mathrm{d}\eta^{\wedge} \rangle_{\Psi} \\
    &= \langle \psi^{\wedge},  \mathrm{d}\eta^{\wedge} \rangle_{\mathfrak{g}} = \psi^{\transpose}P\mathrm{d}\eta \\
    &= \mathrm{d}(\frac{1}{2}\psi^{\transpose}P\psi).
\end{aligned}
\end{equation}


Therefore, we have $$h = \frac{1}{2}\psi^\transpose P \psi + C, \quad  C\in \mathbb{R}.$$ We let $C = 0$ and we show that $h$ is a quadratic function of $\psi$ that satisfy Definition \ref{def:error_fun}.

Now we show that the linear feedback \eqref{eq:lin_fb_1st} can exponentially stabilize the equilibrium. By the Rayleigh quotient argument \cite{horn_johnson_1985} we can show that there exist constants $a_1, a_2 > 0$, such that
$$ a_1 \|\psi\|^{2} \le h = \frac{1}{2}\|\psi\|^2_{P} \le a_2 \|\psi\|^2.$$
There also exist constants $b_1, b_2 > 0$ such that 
$$b_1 \|\psi\|^2 \le -\dot{h} = \|\psi\|_{Q}^2 \le b_2 \|\psi\|^2 .$$
Finally we have
$$ \dot{h} \le -b_1 \|\psi\|^2 \le -\frac{b_1}{a_2}h.$$
Thus, by Definition \ref{def:ES_lyap}, we show that the equilibrium $\psi = 0$, $\Psi = \exp{(0)} = I$ is exponentially stable. Additionally, as this condition holds for any $\psi$, the equilibrium $\psi = 0$ is also globally stable. 

\subsection{Main theorem}
We now present the main theorem by showing that $$h:=h(\exp{(\psi)}) = \frac{1}{2}\|\psi\|^2_P$$
is a Lyapunov candidate function for equilibrium $\psi = 0$ and provide its gradient. 
\begin{theorem} 
\label{theorem:Lyap_candiadate}
Consider the state $X \in \mathcal{G}$, $\phi \in \mathbb{R}^{\dim \mathfrak{g}}$, and $X = \exp{(\phi)}$. We consider the metric in Definition \ref{def:inner_product}. The function $h = \frac{1}{2} \|\phi\|^2_{P}$ is a candidate Lyapunov function and the gradient of $h$ with respect to $X$ is
\begin{equation}
    \nabla h = X\phi^{\wedge}.
\end{equation}
\end{theorem}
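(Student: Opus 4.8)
The plan is to prove the two assertions separately: first the positive-definiteness that qualifies $h$ as a Lyapunov candidate, and then the gradient identity, where essentially all of the work sits. The Lyapunov-candidate part is immediate: since $P$ is positive definite, $h(X) = \tfrac12\phi^\transpose P\phi \ge 0$ with equality iff $\phi = 0$, i.e.\ iff $X = \exp(0) = I$, and $h$ is smooth wherever the logarithmic coordinate $\phi = \log X$ is defined. This supplies the positivity of Definition~\ref{def:error_fun} and the lower branch of Definition~\ref{def:ES_lyap}; the quantitative Rayleigh-quotient bounds $a_1\|\phi\|^2 \le h \le a_2\|\phi\|^2$ then follow from the extreme eigenvalues of $P$.

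For the gradient I would work directly from the defining relation $\langle \nabla h, Y\rangle_X = \mathcal{L}_Y h$ and test it against an arbitrary tangent vector at $X$. Because $\mathcal{G}$ is a Lie group, every $Y \in T_X\mathcal{G}$ can be written by left translation as $Y = X\eta^\wedge$ for some $\eta \in \mathbb{R}^{\dim\mathfrak{g}}$. The left-hand side is then handled by the left-invariance established in Definition~\ref{def:inner_product}: $\langle X\phi^\wedge,\, X\eta^\wedge\rangle_X = \langle\phi^\wedge,\eta^\wedge\rangle_{\mathfrak{g}} = \phi^\transpose P\eta$. Thus the theorem reduces to the single scalar identity $\mathcal{L}_{X\eta^\wedge} h = \phi^\transpose P\eta$ for all $\eta$, after which non-degeneracy of the metric forces $\nabla h = X\phi^\wedge$.

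The right-hand side is the crux and the main obstacle. Evaluating $\mathcal{L}_{X\eta^\wedge} h = \frac{d}{ds}\big|_{s=0}\, h\big(X\exp(s\eta)\big)$ requires differentiating the logarithmic coordinate $\phi(s) = \log\big(X\exp(s\eta)\big)$, and this derivative is \emph{not} $\eta$ but $\Jr(\phi)^{-1}\eta$, where $\Jr$ is the (right) Jacobian of the exponential map arising from the $\mathrm{dexp}$ differential. Hence $\mathcal{L}_{X\eta^\wedge} h = \phi^\transpose P\,\Jr(\phi)^{-1}\eta$, and matching this with $\phi^\transpose P\eta$ is exactly the step that must be justified, i.e.\ one must verify $\Jr(\phi)^{-\transpose}P\phi = P\phi$. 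The key enabling fact is $\mathrm{ad}_\phi\phi = [\phi^\wedge,\phi^\wedge]^\vee = 0$, which makes $\Jr(\phi)$ act as the identity along the radial direction $\phi$, so that the Jacobian drops out and $\mathrm{d}h = \phi^\transpose P\,\mathrm{d}\phi = \mathrm{d}(\tfrac12\phi^\transpose P\phi)$ as in the preceding subsection. I expect this to be the delicate point, since for a general $P$ on a general group the required cancellation needs the radial structure of $\Jr$ to interact correctly with $P$; it holds cleanly in the cases of interest (e.g.\ on $\SO(3)$, where $\mathrm{ad}_\phi$ is skew-symmetric so that $\Jr^\transpose\phi = \phi$ and a scalar metric lets the identity go through). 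I would either carry out this verification explicitly in the relevant setting or record the structural assumption it requires; once the scalar identity is in hand, the gradient formula, and hence the theorem, follow.
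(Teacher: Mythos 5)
Your proof takes the same route as the paper's: both reduce the gradient identity, via left-invariance of the metric, to the scalar statement $\mathcal{L}_{X\eta^\wedge}h = \phi^\transpose P\eta$ for all $\eta$. The difference is that you carry out the reduction honestly, and the ``delicate point'' you isolate is exactly the step the paper's proof elides. The paper writes $\mathrm{d}h = \phi^\transpose P\,\mathrm{d}\eta$ where $\mathrm{d}X = X\,\mathrm{d}\eta^\wedge$, which tacitly identifies the left-trivialized differential $\mathrm{d}\eta$ with the differential $\mathrm{d}\phi$ of the logarithmic coordinate; since in fact $\mathrm{d}\eta = \Jr(\phi)\,\mathrm{d}\phi$, the paper's chain of equalities is valid precisely when the identity you flag, $\Jr(\phi)^{-\transpose}P\phi = P\phi$, holds. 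So you have not merely reproduced the paper's argument; you have located its hidden assumption.

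You are also right to treat that identity as something to be verified or assumed rather than a free fact, because it fails for a general positive definite $P$. Note that $\mathrm{ad}_\phi\phi = 0$ only gives $\Jr(\phi)\phi = \phi$; it says nothing about $\Jr(\phi)^\transpose$ fixing $P\phi$. Concretely, on $\SO(3)$ (where $\mathrm{ad}_\phi^\transpose = -\mathrm{ad}_\phi$, so $\Jr(\phi)^\transpose = \Jr(-\phi)$ is the left Jacobian) one has, with $\theta = \|\phi\|$,
\[
\Jr(\phi)^\transpose P\phi - P\phi \;=\; \tfrac{1-\cos\theta}{\theta^2}\,\phi^{\wedge} P\phi \;+\; \tfrac{\theta-\sin\theta}{\theta^3}\,\phi^{\wedge 2} P\phi ,
\]
and taking $P = \diag(1,2,3)$, $\phi = (1,1,0)^\transpose$ gives $\phi^{\wedge}P\phi = (0,0,1)^\transpose$ and $\phi^{\wedge 2}P\phi = (1,-1,0)^\transpose$, which are orthogonal and nonzero, so the right-hand side cannot vanish and $\nabla h \neq X\phi^{\wedge}$ at $X = \exp(\phi)$. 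Hence the theorem in its stated generality (arbitrary matrix Lie group, arbitrary positive definite $P$) is not established by the paper's proof either; what both arguments do prove is the case where $P\phi$ commutes with $\phi$ for every $\phi$ --- e.g.\ $P = cI$ on $\SO(3)$, which is the setting of the paper's controller and simulations. Your proposed fix, recording the structural assumption explicitly, is the right one; be aware that it is in tension with the paper's later reliance on taking $P$ to be an arbitrary solution of the Lyapunov equation \eqref{eq:lyap_eq}, since such a $P$ need not satisfy the assumption.
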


\begin{proof}
Let $h = \frac{1}{2}\|\phi\|_{P}^2$, $\phi \in \mathfrak{g}$. $h$ is positive definite and $h=0$ if and only if $\phi = 0$. Then we compute the differential of $h$ as
\begin{equation}
    \begin{aligned}
        \mathrm{d}h & = \phi^{\transpose}P \mathrm{d}\eta = \langle \phi^{\wedge}, \mathrm{d}\eta^{\wedge} \rangle_{\mathfrak{g}} \\
        & = \langle X\phi^{\wedge}, X\mathrm{d}\eta^{\wedge} \rangle_{X}\\
        & = \langle X\phi^{\wedge}, \mathrm{d}X \rangle_{X} .
    \end{aligned}
\end{equation}
Thus, we have the gradient
$$\nabla h = X\phi^{\wedge}.$$
\end{proof}

\begin{proposition}
Consider the state in Theorem~\ref{theorem:Lyap_candiadate}, let $\phi^{\wedge}, \phi^{*\wedge} \in \mathfrak{g}$. $h = \frac{1}{2}\|\phi - \phi^{*}\|_{P}^2$ is a candidate Lyapunov function for equilibrium $\phi = \phi^{*}$. The gradient of $h$ with respect to $h$ is
\begin{equation}
    \nabla h = X(\phi - \phi^{*})^{\wedge}.
\end{equation}
\end{proposition}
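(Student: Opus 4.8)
The plan is to follow the same template used in the proof of Theorem~\ref{theorem:Lyap_candiadate}, but now centered at the shifted equilibrium $\phi = \phi^{*}$ rather than at the origin. The key observation is that the proposition is really the earlier theorem with $\phi$ replaced by the \emph{difference} $\phi - \phi^{*}$. First I would verify that $h = \frac{1}{2}\|\phi - \phi^{*}\|_{P}^2$ is a valid candidate Lyapunov function for the equilibrium $\phi = \phi^{*}$: since $P$ is positive definite, $h \ge 0$ with equality if and only if $\phi - \phi^{*} = 0$, i.e. $\phi = \phi^{*}$, so the positive-definiteness condition of Definition~\ref{def:ES_lyap} holds about the shifted equilibrium. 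This step is immediate and requires no new machinery.

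The main computation is the gradient. I would compute the differential $\mathrm{d}h$ exactly as before, writing $\mathrm{d}\Psi = X\,\mathrm{d}\eta^{\wedge}$ with $\mathrm{d}\eta^{\wedge} \in \mathfrak{g}$. Differentiating the quadratic form gives
\begin{equation}
    \mathrm{d}h = (\phi - \phi^{*})^{\transpose} P \, \mathrm{d}\eta = \langle (\phi - \phi^{*})^{\wedge}, \mathrm{d}\eta^{\wedge} \rangle_{\mathfrak{g}}.
\end{equation}
Here the crucial point is that $\phi^{*}$ is a fixed target, so its differential vanishes and only $\mathrm{d}\phi = \mathrm{d}\eta$ survives; this is why the quadratic derivative produces the factor $\phi - \phi^{*}$ rather than $\phi$. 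I would then invoke left-invariance of the metric in Definition~\ref{def:inner_product} to lift the pairing from $\mathfrak{g}$ up to $T_X\mathcal{G}$, obtaining
\begin{equation}
    \mathrm{d}h = \langle X(\phi - \phi^{*})^{\wedge}, X\,\mathrm{d}\eta^{\wedge} \rangle_X = \langle X(\phi - \phi^{*})^{\wedge}, \mathrm{d}X \rangle_X,
\end{equation}
and by the definition of the gradient read off $\nabla h = X(\phi - \phi^{*})^{\wedge}$.

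The subtle point I would flag as the main obstacle is the identification $\mathrm{d}\eta = \mathrm{d}\phi$ near the equilibrium. The map $\phi \mapsto \eta$ implicitly defined through $X = \exp(\phi)$ and $\mathrm{d}X = X\,\mathrm{d}\eta^{\wedge}$ is governed by the right Jacobian of the exponential, so in general $\mathrm{d}\eta$ and $\mathrm{d}\phi$ differ by the Jacobian factor. The clean form of the gradient relies on evaluating the differential in the $\eta$ coordinates (the body-frame perturbation) so that the quadratic structure transports directly; this is exactly the convention used in the proof of the preceding theorem, and I would simply reuse it rather than re-derive the Jacobian relationship. The exponential-stability conclusion then follows verbatim from the Rayleigh-quotient argument preceding the main theorem, applied to the variable $\phi - \phi^{*}$, so no separate stability proof is needed beyond citing Definition~\ref{def:ES_lyap}.
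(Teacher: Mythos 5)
Your proposal matches the paper's proof, which is simply stated as ``Similar to the proof of Theorem~\ref{theorem:Lyap_candiadate}'': you repeat that theorem's differential computation with $\phi$ replaced by $\phi - \phi^{*}$, use left-invariance of the metric, and read off the gradient. The Jacobian subtlety you flag (relating $\mathrm{d}\eta$ to $\mathrm{d}\phi$) is likewise present but unaddressed in the paper's own argument, so your treatment is faithful to, and no weaker than, the original.
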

\begin{proof}
Similar to the proof of Theorem~\ref{theorem:Lyap_candiadate}. 
\end{proof}

\begin{theorem}
Consider the state in Theorem~\ref{theorem:Lyap_candiadate} as a trajectory. Let $\xi^\wedge \in \mathfrak{g}$. The system $$\dot{X} = X\xi^{\wedge}$$ can be exponentially stabilized to $X = I$ by linear feedback $\xi = - K\phi$, where $K$ is a gain matrix with only positive eigenvalues. 
\end{theorem}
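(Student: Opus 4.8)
The plan is to recognize this statement as the regulation special case of the tracking framework already assembled in Section~\ref{sec:cost}, and to reuse Theorem~\ref{theorem:Lyap_candiadate} together with the Lyapunov-equation argument surrounding~\eqref{eq:dhdt}. First I would observe that driving $X$ to $I$ is precisely the error dynamics~\eqref{eq:error_dynamics} specialized to $X_{d,t}\equiv I$ and $\xi_{d,t}=0$, so that $\Psi_t = X_t$ and $\phi = \log(X)$ plays the role of the Lie-algebra error. The Lyapunov candidate is $h=\frac{1}{2}\|\phi\|_P^2$, whose gradient $\nabla h = X\phi^{\wedge}$ is furnished directly by Theorem~\ref{theorem:Lyap_candiadate}.

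Second, I would differentiate $h$ along the closed-loop flow $\dot{X}=X(-K\phi)^{\wedge}$, using the chain rule for the Lie derivative and the left-invariance of the metric in Definition~\ref{def:inner_product}:
\begin{equation*}
\dot{h} = \langle \nabla h, \dot{X}\rangle_X = \langle X\phi^{\wedge}, X(-K\phi)^{\wedge}\rangle_X = \langle \phi^{\wedge}, -(K\phi)^{\wedge}\rangle_{\mathfrak{g}} = -\phi^{\transpose} P K \phi.
\end{equation*}
Symmetrizing the quadratic form yields $\dot{h} = -\phi^{\transpose}Q\phi$ with $Q=\tfrac{1}{2}(PK+K^{\transpose}P)$, exactly as in~\eqref{eq:dhdt}.

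Third, I would invoke the Lyapunov equation~\eqref{eq:lyap_eq}: since $-K$ is Hurwitz whenever the eigenvalues of $K$ all have positive real part (in particular when $K$ has only positive eigenvalues), standard Lyapunov theory guarantees that for any chosen $Q\succ 0$ there is a positive definite $P$ solving $P(-K)+(-K)^{\transpose}P+2Q=0$. With $P,Q\succ 0$, a Rayleigh-quotient argument (as used after~\eqref{eq:lyap_eq}) gives constants $a_1,a_2,b_1>0$ with $a_1\|\phi\|^2 \le h \le a_2\|\phi\|^2$ and $-\dot{h}=\|\phi\|_Q^2 \ge b_1\|\phi\|^2$, hence $\dot{h}\le -\frac{b_1}{a_2}h$. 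By Definition~\ref{def:ES_lyap}, the equilibrium $\phi=0$, i.e.\ $X=I$, is exponentially stable, and since the bounds hold for every $\phi$ the conclusion is global.

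The main obstacle I anticipate is not the algebra—which mirrors~\eqref{eq:dhdt} verbatim—but the careful bookkeeping of the identifications: ensuring that the $\phi$ in the feedback $\xi=-K\phi$, the $\phi$ in the metric, and the $\log(X)$ defining $h$ all denote the same coordinate, and that the step $\dot{h}=\langle \nabla h,\dot{X}\rangle$ is legitimate via the gradient definition. A genuine subtlety worth flagging concerns the word \emph{global}: the logarithm $\phi=\log(X)$ is smooth only away from the cut locus of the exponential map (e.g.\ rotations by $\pi$ on $\SO(3)$), so the global claim should be read on the largest domain where $\log$ is single-valued and differentiable, and I would state this caveat explicitly rather than assert stability literally on all of $\mathcal{G}$.
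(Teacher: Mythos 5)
Your proposal is correct and follows essentially the same route as the paper's own proof, which simply invokes the Lyapunov candidate of Theorem~\ref{theorem:Lyap_candiadate}, the derivative computation of~\eqref{eq:dhdt}, and the Lyapunov equation~\eqref{eq:lyap_eq} to guarantee existence of the metric $P$ — you have merely written out in full the details the paper defers to Section~\ref{sec:cost}. Your closing caveat about the logarithm's cut locus is a legitimate refinement the paper only acknowledges later in Section~\ref{sec:discuss}, but it does not change the argument.
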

\begin{proof}
The condition can be verified by the time derivative of the Lyapunov function defined in Theorem \ref{theorem:Lyap_candiadate} following the same process in \eqref{eq:dhdt}. The existence of such a metric \eqref{def:inner_product} can be justified by the Lyapunov equation \eqref{eq:lyap_eq}. 
\end{proof}

\section{Controller design on SO(3)}
\label{sec:design}
In this section, we propose a PD controller and an iLQR trajectory optimization algorithm given the candidate Lyapunov function we proposed in the last section. Both frameworks are implemented in SO(3). 
\subsection{Dynamics on SO(3)}
Now consider the rotational motion of a 3D rigid body. The state of the robot can be represented by a rotation matrix $$R \in \mathrm{SO}(3) = \{R\in \mathbb{R}^{3 \times 3} \mid R^{\transpose}R = I_3, \det(R) = 1\}. $$
The Lie algebra element becomes the angular velocity $\omega$ in body-fixed frame. The reconstruction equation can be expressed as
\begin{equation}
    \dot{R} = R\omega^{\wedge}. 
\end{equation}
We can write the \emph{forced} Euler-Poincar\'{e} equations~\cite{Bloch1996} as
\begin{align}
\label{eq:forcedEP}
\begin{aligned}
    J_b \dot{\omega}  &= \omega \times J_b \omega + u. 
\end{aligned}
\end{align}
where $J_b$ is the inertia matrix in body frame and $u$ is the torque applied in the body fixed principle axes. 

Thus, the error between two configuration becomes $$\Psi = R_d^{\transpose}R=:\exp{(\psi}),$$ and the corresponding angular velocity error becomes $$\dot{e} := \omega - R_d^{\transpose}R\omega_d.$$ 
\subsection{PD tracking controller}
We design the tracking controller for a system on SO(3). Referring to \cite{bullo1999tracking} and \cite{lee2011geometric}, the PD controller can be designed as a sum of feedback $F_{PD}$ and feed-forward $F_{ff}$ as
\begin{equation}
\begin{aligned}
    u &= F_{PD} + F_{ff}, \\
    F_{PD} &= -K_p(\Psi^{-1}\nabla h(\Psi))^{\vee} - K_d\dot{e}, \\
    F_{ff} &= \omega \times J_b\omega - J_b(\omega^{\wedge} R^{\transpose}R_d\omega_d - R^{\transpose}R_d\dot{\omega}_d),
\end{aligned}
\end{equation}
where $K_p$ and $K_d$ are gains for the error and velocity error, respectively. Based on the Lyapunov function $h$ we proposed in the last section, we can design the feedback term as
\begin{equation}
    F_{PD, \textnormal{proposed}} = -K_p\psi - K_d\dot{e}. 
\end{equation}

\subsection{Trajectory optimization by iterative LQR}
We consider an unconstrained trajectory optimization problem on Lie group of the following form.
\begin{problem}
\label{prob:nmpc}
Find $u_t$ such that
\begin{align*}
    \min_{u_t} \quad & N(X_{t_f}, \xi_{t_f}) + \int_{0}^{t_f} L(X_t, \xi_t, u_t) \ dt \\
    \text{s.t. } & \dot{X}_t =X \xi_{t}^{\wedge} \\
    & \dot{\xi}_t  = f(\xi_t, u_t) \\
    & \xi_0 = \xi(0) , X_0 = X(0),
\end{align*}
\end{problem}
\noindent where $t_f$ is the final time, $N(\cdot)$ is the terminal cost, $L(\cdot)$ is the stage cost.

To solve Problem \ref{prob:nmpc}, we adopt the iLQR framework \cite{ct-LQR-LieGroup} that approaches the (local) optimum by iteratively solving the LQR sub-problem (backward pass) and rolls out a new trajectory based on the optimal policy by the LQR sub-problem (forward pass). In the backward pass, a LQR problem is formulated by the perturbed equation of motion and cost function around the trajectory from the last iteration. Then an optimal linear feedback policy is obtained. In the forward pass, the optimal policy is applied to the nominal equation of motion to integrate a new trajectory. 

Research in  \cite{ct-LQR-LieGroup} derived the continuous time LQR sub-problem in the Lie algebra via calculus in a Banach space. For simplification, we use Taylor series to obtain the result. Consider $\{X_t, \xi_t, u_{t}\}$ as the trajectory and $\{X_{d,t}, \xi_{d,t}, u_{d,t}\}$ as the trajectory from the last iteration. Then the dynamics of the perturbed state $\Psi_t$ can be obtained via \eqref{eq:error_dynamics}. Given the first-order approximation of the exponential map
$$\Psi_t=\exp(\psi_t) \approx I + {\psi}^{\wedge}_t, $$
and a first-order approximation of the adjoint map $$\mathrm{Ad}_{\Psi_t} \approx \mathrm{Ad}_{I + {\psi_t}^\wedge},$$
we can linearize~\eqref{eq:error_dynamics} by discarding the second-order terms as
\begin{equation}
    \frac{d}{dt} \Psi_t \approx \frac{d}{dt}(I + {\psi}^{\wedge}_t) \approx (I + {\psi}^{\wedge}_t)({\xi}_t - \mathrm{Ad}_{(I - {\psi}_t^{\wedge})}{\xi}_{d,t})^{\wedge},
\end{equation}
\begin{equation}
\label{eq:se3_error_lin}
    \begin{aligned}
    \dot{\psi}_t = -\mathrm{ad}_{\xi_{d,t}}\psi_t + \xi_t - \xi_{d,t}.
    \end{aligned}
\end{equation}
Equation~\eqref{eq:se3_error_lin} is the linearized perturbed state in the Lie algebra. Then we define the perturbed twists $\delta \xi_t$ and perturbed inputs $\delta u_t$ as:
\begin{equation}
\delta \xi_t = \xi_t - \xi_{d,t}, \quad \delta u_t = u_t - u_{d,t}. 
\end{equation}
The perturbed twist dynamics becomes:
\begin{equation}
    \delta\dot{\xi}_t = F_t\delta \xi_t + G_t \delta u_t.
\end{equation}
where $F_t$ and $G_t$ are Jacobians of $f(\xi_t, u_t)$ around trajectory about $\xi_t$ and $u_t$. 
We define the perturbed state as $$x_t:=\begin{bmatrix} \psi_t \\ \delta \xi_t \end{bmatrix}, v_t = \delta u_t $$ and we then have the linearized perturbed state trajectory
\begin{equation}
\label{eq:pertuebed_system}
    \begin{aligned}
    \dot{x}_t &= A_t x_t + B_t v_t.  \\
    A_t &= \begin{bmatrix}
    -\mathrm{ad}_{\xi_{d,t}}\psi_t & I \\
     0 & F_t 
    \end{bmatrix}, B= \begin{bmatrix}
    0 \\
     G_t
    \end{bmatrix} 
    \end{aligned}
\end{equation}

We design the stage cost $q$ and terminal cost $p$ as
\begin{equation}
    \begin{aligned}
    q(x,v) &= \frac{1}{2}(x - x_d)^{\transpose}Q(x - x_d) + \frac{1}{2}(v - v_d)^{\transpose}S(v - v_d), \\
    p(x) &= \frac{1}{2}(x - x_d)^{\transpose}V(x - x_d).
    \end{aligned}
\end{equation}
The cost matrix $Q, V$ and $S$ are set by the user and remains constant during all iterations. The desired state $x_d$ and $v_d$ are updated in each iteration. 
Based on the perturbed state trajectory and the local cost function, we can derive the local LQR problem as follows.
\begin{problem}
\label{prob:cmpc}
Find feedforward $v_{ff,t}$ and linear feedback $K_t$ such that
\begin{align*}
    \min_{K_t, v_{ff,t}} \quad & p(x_f) + \int_{0}^{t_f} q(x_t, v_t) \ dt \\
    \text{s.t. } & \dot{x}_t = A_tx_t + B_t(v_{ff,t} + K_tx_t) \\
    & x_0 = 0.
\end{align*}
\end{problem}
Then we discretize the perturbed system \eqref{eq:pertuebed_system} and solve a discrete LQR in Problem \ref{prob:dtcmpc}.
\begin{problem}
\label{prob:dtcmpc}
Find the feed-forward $v_{ff, n}$ and feedback gain $K_n$ such that
\begin{align*}
    \min_{v_{ff,n}, K_n} \quad & p(x_N) + \sum_{n=1}^{N-1}q(x_n, v_n) \\
    \text{s.t. } & x_{n+1} = A_{n}x_n + B_n(v_{ff,n} + K_nx_n) \\
    & x_0 = 0
\end{align*}
\end{problem}
Suppose the sampling time step is $\Delta t$ and denote the time stamp by $n$, the matrix $A_n$ and $B_n$ in Problem~\ref{prob:dtcmpc} can be obtained by a zero-order hold. 

Now, we can obtain the solution to this LQR problem via dynamic programming \cite{control_limited_ddp} that solve a one step optimal control problem in each backward step.
\begin{problem}
\label{prob:cmpc_onestep}
Given the optimal cost-to-go $p_{n+1}$ at time step $n+1$, find the feed-forward $v_{ff, n}$ and feedback gain $K_n$ such that
\begin{align*}
    \min_{v_{ff,n}, K_n} \quad & p_{n+1}(x_{n+1}) + q(x_n, v_n) \\
    \text{s.t. } & x_{n+1} = A_{n}x_n + B_n(v_{ff,n} + K_nx_n) \\
\end{align*}
\end{problem}
Let the subscripts of $q$ and $p$ denotes the derivative and Hessian. The main process to obtain the optimal control policy is:
\begin{equation}
\label{eq:dp_1}
\begin{aligned}
    q_{x,n} &= -Qx_{d,n} + A_n^{\transpose}p_{x,n+1} \\
    q_{u,n} &= -Su_{d,n} + B_n^{\transpose}p_{x,n+1} \\
    q_{xx,n} &= Q + A_n^{\transpose}p_{xx,n+1}A_n \\
    q_{ux,n} &= B_n^{\transpose}p_{xx,n+1}A_n \\
    q_{uu,n} &= S + B_n^{\transpose}p_{xx,n+1}B_n \\
    v_{ff,n} &= -q_{uu,n}^{-1}q_{u,n}, K_n=-q_{uu,n}^{-1}q_{ux,n}
\end{aligned}
\end{equation}
The cost-to-go at each iteration are updated by:
\begin{equation}
\label{eq:dp_2}
\begin{aligned}
    p_{x,N} &= -Vx_{d,N},\ \ \ p_{xx,N} = V \\
    p_{x,n} &= q_{x,n+1} - q_{ux,n+1}^{\transpose}q_{uu,n+1}^{-1}q_{u,n+1} \\
    p_{xx,n} &= q_{xx,n+1} - q_{ux,n+1}^{\transpose}q_{uu,n+1}^{-1}q_{ux,n+1} \\
\end{aligned}
\end{equation}
In the forward pass, we denote the new trajectory with $\hat{(\cdot)}$ and roll out the trajectory by:
\begin{equation}
\label{eq:ilqr_roll}
\begin{aligned}
    \hat{X}_{d, 0} &= \hat{X}_{d, 0}, \ \hat{\xi}_{d, 0} = \hat{\xi}_{d, 0}. \\
    \hat{u}_{d,n} &= u_{d,n} +\gamma_n v_{ff,n} + K_n \begin{bmatrix} \log{({X}_{d, n}^{-1}\hat{X}_{d,n})}\\
    \hat{\xi}_{d,n} - \xi_{d,n} 
    \end{bmatrix} \\
    \hat{X}_{d,n+1} &= \hat{X}_{d,n}\exp{(\hat{\xi}_n \Delta t)}, \hat{\xi}_{d,n+1} = \hat{\xi}_{d,n} + \Delta t f(\hat{\xi}_n, \hat{u}_n),
\end{aligned}
\end{equation}
where $\gamma_n$ is the line search step length and $f$ is obtained in \eqref{eq:forcedEP} for the SO(3) case. In our implementation, we set $\gamma_n = 1$ for simplification. The main process of the proposed iLQR is concluded in Algorithm \ref{al:iLQR}.

\begin{algorithm}[t]
\caption{Iterative LQR on Lie Group}
\label{al:iLQR}
\small 
\begin{algorithmic}[1]
\State \textbf{Input:} Cost matrix $Q, S, V$, goal state $\{X_{g,k}, \xi_{g,k}, u_{g,k}\}$, iteration number $N$. 
\State \textbf{Initialize:} Initial trajectory: $\mathcal{X}:=\{X_{d,k}, \xi_{d.k}, u_{d,k}\}$.
\State \hspace{1.3cm} Set desired state in each iteration as: 
$$
x_{d,k} = \begin{bmatrix} \log{(X_{d,k}^{-1}X_{g,k})}\\ \xi_{g,k} - \xi_{d,k} \end{bmatrix}, v_{d,k} = u_{g,k} - u_{d,k}. 
$$

\For{$i \in (0,\dots,N)$}
    \State Obtain the perturbed system \eqref{eq:pertuebed_system} around $\mathcal{X}$. 
    \State Solve Problem~\ref{prob:dtcmpc} by \eqref{eq:dp_1} and \eqref{eq:dp_2} to obtain the optimal policy $\mathcal{U}:=\{K_{n}, v_{ff,n}\}$. \Comment{Backward pass}
    \State Roll out the new trajectory $\hat{\mathcal{X}}$ by \eqref{eq:ilqr_roll} given $\mathcal{U}$. \Comment{Forward pass}
    \State $\mathcal{X} \gets \hat{\mathcal{X}}$
    \State Update the desired state $$
x_{d,k} = \begin{bmatrix} \log{(X_{d,k}^{-1}X_{g,k})}\\ \xi_{g,k} - \xi_{d,k} \end{bmatrix}, v_{d,k} = u_{g,k} - u_{d,k}. 
$$
\EndFor 
\State \textbf{return} $\mathcal{X}, \mathcal{U}$
\end{algorithmic}
\end{algorithm}


\section{Numerical simulation}
\label{sec:sim}

In this section, we provide the simulation of the proposed PD controller and the iLQR. 
\subsection{PD control on SO(3)}
We here compare the proposed controller with the baseline provided in paper \cite{lee2010geometric} and \cite{bullo1999tracking}, where the an error function and its derivative are explicitly designed:
\begin{equation}
    \label{eq:baseline_error}
    \begin{aligned}
    h&=\frac{1}{2}\mathrm{tr}(I - \Psi), \\
    \Psi^{-1}\nabla h&=\frac{1}{2}(\Psi - \Psi^{-1}). 
    \end{aligned}
\end{equation}
Thus, the baseline controller can be expressed as: 
\begin{equation}
    F_{PD, \textnormal{baseline}} = -\frac{1}{2}K_p(\Psi - \Psi^{-1})^{\vee} - K_d\dot{e}. 
\end{equation}

\begin{table}
\caption{PD Control Parameters.}
\centering
\label{table:pd_sim_param}
\small 
\begin{tabular}{cc|cc}
\hline
{$K_p$} & { $\text{diag}(1000, 1000, 1000)$ }                  & {$\omega_{d,x}$ }   & {$\sin(0.2t+0.1)$}\\
{$K_d$} & { $\text{diag}(100, 100, 100)$ }               & {$\omega_{d,y}$ }   & {$\sin(0.3t+\frac{\pi}{5})$}\\
{$J_b$} & { $\text{diag}(1, 3, 5)$ } & {$\omega_{d,z}$ }   & {$\sin(0.1t+\frac{\sqrt{2}}{3})$}\\
{$R_0$}      & {$0.999\pi$}                           & {$\omega_{0}$ } & { $[0,0,0]$}\\
\hline
\end{tabular}
\end{table}

We define a time-varying trajectory by manually setting the body-fixed frame angular velocity as sinusoidal waves. We tested the case with a large initial error that is approaching $\pi$. The simulation parameters are presented in Table \ref{table:pd_sim_param}. We can see that the error still converges fast using the proposed controller. However, the response of the baseline controller is much slower at the initial pose. The tracking performance is presented in Fig. \ref{fig:sim_PD_tracking}.

By Rodrigues' rotation formula
$$\Psi = \exp{(\psi)} = I + \frac{\sin{\|\psi\|}}{\|\psi\|}\psi^{\wedge} + \frac{1 - \cos{\|\psi\|}}{\|\psi\|^2}\psi^{\wedge 2},$$
we can verify that the proportional term of the baseline feedback is:
$$(\Psi^{-1}\nabla h)^{\vee}= \frac{\sin{\|\psi\|}}{\|\psi\|}\psi. $$
This suggests that when $\|\psi\|$ approaches $\pi$, the proportional feedback approaches 0. This effect can be explained by the fact that the error function \eqref{eq:baseline_error} is bounded with respect to $\psi$. Thus, using \eqref{eq:baseline_error} as the error function makes the gradient vanish when the error gets its maximum value at $\|\psi\|=\pi$. However, the gradient of the proposed error function does not vanish thus enables faster convergence even when $\|\psi\|$ is near $\pi$. This effect has been illustrated in Fig. \ref{fig:cover}. For a rigorous proof of exponential stability of the second-order system, we can follow the process in \cite{bullo1999tracking} and incorporate the Lyapunov function in Theorem \ref{theorem:Lyap_candiadate}.  

\begin{figure}[t]
    \centering
    \includegraphics[width=1\columnwidth]{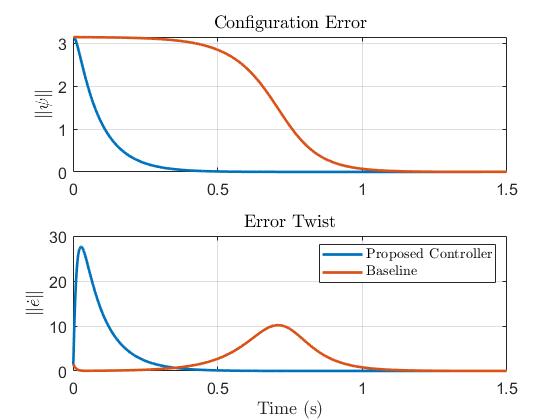}
    \caption{Tracking a time-varying trajectory by PD controller. The baseline controller responses slower when the initial pose error is approaching $\pi$. The proposed controller still respond fast despite the initial error. }
    \label{fig:sim_PD_tracking}
\end{figure}


\subsection{iLQR on SO(3)}

\begin{table}[t]
\caption{iLQR Simulation Parameters.}
\centering
\label{table:sim_param}
\small 
\begin{tabular}{cc|cc}
\hline
{$t_f$} & { 3 sec }                    & {$R_g$ }          & {$0.995\pi$}\\
{$\Delta t$} & { 0.01 sec }            & {$R_{d,k}$ }      & { $I$}\\
{$J_b$} & { $\text{diag}(5, 10, 15)$ } & {$\omega_{g}$ }   & { $[0,0,0]$}\\
{$u_{g,k}$}      & {$[0,0,0]$}                           & {$\omega_{d,k}$ } & { $[0,0,0]$}\\
\hline
\end{tabular}
\end{table}

\begin{table}[t]
\caption{iLQR Parameters.}
\centering
\label{table:control_param}
\small 
\begin{tabular}{c|ccc}
\hline
{}    & { iLQR }           & {DDP }            & {Improved DDP}\\\hline
{$V$} & { $1000I$ }         & {Adaptive}        & { $1000I$ }\\
{$Q$} & { $0$ }            & {$0$ }            & { $0$}\\
{$S$} & { $0.01I$}         & {Adaptive } & { $0.01I$}\\
{$V_g$} & { N/A}         & {$1000I$ } & { N/A}\\
\hline
\end{tabular}
\end{table}

We compare the proposed framework with the open source discrete time Lie group DDP algorithm in \cite{lie-ddp}. This baseline algorithm designs the cost function on manifold thus the gradient and Hessian matrix need to be updated at each iteration. As the second-order derivative of the dynamics is incorporated, the DDP algorithm can reach super linear convergence around the local optimum. As the proposed iLQR algorithm omitted the second order derivative of the discrete dynamics, only linear convergence is possible. For this reason, iLQR can be considered as a simplification of DDP. We also modified the cost function of DDP to ours to show the difference, which we call DDP*. 

We consider optimizing a trajectory that rotates a rigid body from the identity $I$ to a randomly generated pose $R_{g}$ that is $0.995 \pi$ far from $I$. The quaternion representation of $R_g$ is $[0.0157,0.5627,0.2839,-0.7762]$. The system is initialized with all states at the origin and input 0. All the simulation parameters are listed in Table~\ref{table:sim_param}. 

For the cost function design, we only penalize the terminal state and inputs in the stage cost. Thus, for iLQR and DDP* the terminal cost becomes: 
$$\frac{1}{2}\|(\psi - \psi^*)\|_{V_{\psi}}^{2} + \frac{1}{2}\|\omega\|_{V_{\omega}}^{2}$$
where $\exp{(\psi^*)} = R_d^{-1}R_g$. The subscript in $V_{(\cdot)}$ is to denote the block of $V$ corresponds to $\psi$ or $\omega$. At each iteration we compute the $\psi^*$ based on the terminal configuration of trajectory. The DDP in \cite{lie-ddp}, applied the cost function 
$$\frac{1}{2}\mathrm{tr}((I - R_d^{-1}R)^{\transpose}V_{g, \psi}(I - R_d^{-1}R)) + \frac{1}{2}\|\omega\|_{V_{g,\omega}}^{2}$$ 
to indicate the configuration error. The gradient and Hessian need to be updated in each iteration. The detail of the control parameters are provided in Table~\ref{table:control_param}. 



We use the norm of the difference between the final input and input in each iteration to indicate the convergence rate. The convergence is shown in Fig. \ref{fig:sim_DDP_converge}. We can see that the original DDP converges extremely slowly when the system is far from the optimum. The iLQR exhibits linear convergence rate after a few iterations. The DDP* converges in 7 iterations. Note that the iLQR converges faster in the first 5 iterations. If we combine the iLQR and DDP* as in \cite{lie-ddp}, it could take fewer iteration to converge. 


\begin{figure}[t]
    \centering
    \includegraphics[width=1\columnwidth]{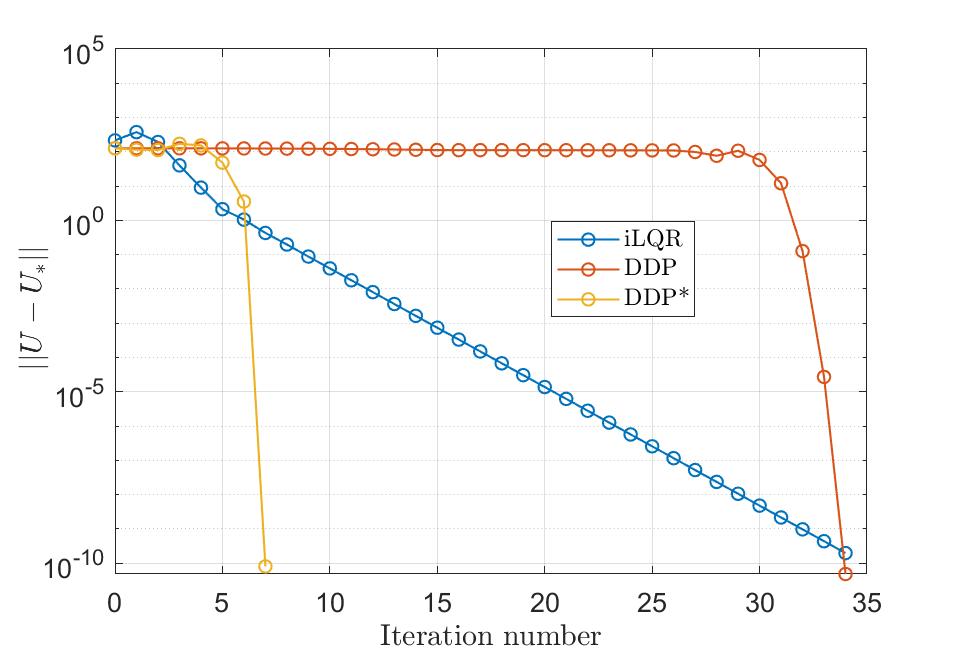}
    \caption{The convergence of different algorithms. We use $U_*$ to denote the final solution. The iLQR exhibits linear convergence rate. The original DDP converges extremely slow at the first 30 iterations when it is far from the local optimum. When equipped with the proposed cost function, the DDP* converges in 7 iterations.}
    \label{fig:sim_DDP_converge}
\end{figure}

\begin{figure}[t]
    \centering
    \includegraphics[width=1\columnwidth]{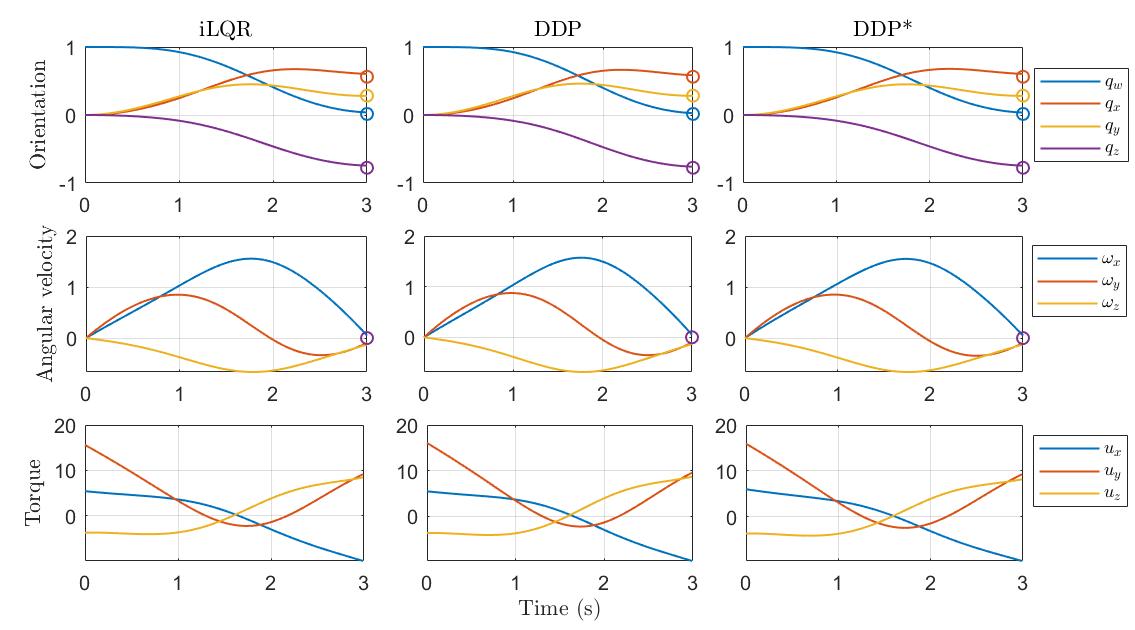} \caption{All three methods converged to a same solution. We presented the orientation indicated by the quaternion, angular velocity and torque. The circle indicates the desired terminal states.}
    \label{fig:sim_DDP_trajectory}
\end{figure}

\section{Discussion}
\label{sec:discuss}
We developed a quadratic function on the Lie algebra and derived its gradient for control on Lie groups. We show that it can be applied to design exponentially stable tracking controllers and accelerate trajectory optimization when the state space evolves on a Lie group.

Research \cite{Ilya-SO3} has shown that a continuous control law cannot globally stabilize SO(3) due to its topological properties. In this work, we note that the logarithmic map can generate discontinuous values by clamping the value in the principal branch. Thus, our PD controller can generate discontinuous control laws which ensure global convergence. 

Some estimation frameworks have fully utilized the Lie group structure, such as the Invariant EKF \cite{barrau2017invariant} and the unified DDP \cite{kobilarov2015differential} framework for perception and control. One future direction is to combine the proposed framework with perception systems such as the work of~\cite{hartley2020contact,lin2021legged}. 




\section{Conclusion}
\label{sec:conclusion}

We studied the problem of geometric control on Lie groups. This work provides a novel insight for designing a quadratic cost function in the Lie algebra via its gradient for control on Lie groups that exploit the symmetry structure of the group. We constructed this cost function via shaping the gradient and introducing a proper left-invariant metric. Based on the proposed cost function, we designed a PD controller for tracking and an iLQR for trajectory optimization. The proposed cost function enables global exponential convergence in tracking control and greatly accelerates trajectory optimization. 

{\small 
\balance
\bibliographystyle{IEEEtran}
\bibliography{bib/strings-full,bib/ieee-full,bib/references}
}

\end{document}